\newtheorem{thm}{Theorem}[section]
\newtheorem{lem}[thm]{Lemma}
\newtheorem{prop}[thm]{Proposition}
\newtheorem{ques}[thm]{Question}
\theoremstyle{definition}
\theoremstyle{remark}
\newtheorem{rem}[thm]{Remark}
\theoremstyle{remark}
\theoremstyle{remark}
\theoremstyle{remark}
\theoremstyle{remark}
\theoremstyle{remark}
\newtheorem{ex}[thm]{Example}
\DeclareMathOperator{\Br}{Br}
\DeclareMathOperator{\C}{\mathbb{C}}
\DeclareMathOperator{\cores}{cor}
\DeclareMathOperator{\G}{\mathbb{G}}
\DeclareMathOperator{\ind}{ind}
\DeclareMathOperator{\Q}{\mathbb{Q}}
\DeclareMathOperator{\Z}{\mathbb{Z}}
\newcommand{\etale}{\'etal\@ifstar{\'e}{e\space}}
\newcommand{\CT}{Colliot-Th\'el\`en\@ifstar{\'e}{e}}
\newcommand{\Veis}{Ve\u\i sfe\u\i ler}
\newcommand*{\TitleFont}{%
  \usefont{\encodingdefault}{\rmdefault}{b}{n}%
  \fontsize{14}{20}%
  \selectfont}
\newcommand*{\AddFont}{%
  \usefont{\encodingdefault}{\rmdefault}{}{n}%
  \fontsize{10}{20}%
  \selectfont}
\begin{document}

\nocite{*}

\title{\TitleFont \textbf{ TOTARO'S QUESTION ON ZERO-CYCLES ON TORSORS}}
\author{R. GORDON-SARNEY AND V. SURESH}
\date{\AddFont DEPARTMENT OF MATHEMATICS \& COMPUTER SCIENCE \\ EMORY UNIVERSITY, ATLANTA, GA 30322 USA}

\maketitle

\begin{abstract}
Let $G$ be a smooth connected linear algebraic group and $X$ be a $G$-torsor.
Totaro asked: if $X$ admits a zero-cycle of degree $d \geq 1$, then does $X$ have a closed \etale point of degree dividing $d$?
While the literature contains affirmative answers in some special cases, we give an example to 
show that the answer is negative in general.
\end{abstract}
\vspace{0.15cm}
\section{Introduction}
\indent
\indent
One approach to understanding the rational points on a variety $X$ over a field $k$ is to study its group of zero-cycles, denoted $Z_0(X)$.
Every rational point on $X$ can be viewed as a zero-cycle of degree 1, where the degree homomorphism $\deg : Z_0(X) \to \Z$ associates each closed point $x \in X$ to the degree of its residue field $[k(x):k]$.
It is natural to ask about the converse: if a variety $X$ admits a zero-cycle of degree 1, does $X$ have a rational point?

The question was originally raised by Serre in the `60s in the case of principal homogeneous spaces (or torsors) under smooth connected linear algebraic groups over fields.
\\\\
\textbf{Serre's Question}. 
Let $G$ be a smooth connected linear algebraic group over a field $k$.
If a $G$-torsor $X$ admits a zero-cycle of degree 1, does $X$ have a rational point?
\\\\
The positive answer to Serre's question for torsors under projective general linear groups is a classical theorem on central simple algebras.
Springer's theorem on quadratic forms answers the question in the affirmative for torsors under orthogonal groups \cite{springer52}.
Bayer--Lenstra settled the question for torsors under unitary groups \cite{bayerlenstra90}.
Sansuc gave an affirmative answer to the question for torsors under any smooth connected linear algebraic group defined over a number field \cite{sansuc81}.
Affirmative answers are known in many other special cases (cf. \cite{niv16}, \cite{black11a}, \cite{black11b}), though in general, Serre's question is still open. 


In the setting of general varieties, which are not necessarily torsors, the literature is rich with striking counterexamples.
The classical Weil estimates show that any curve of genus $\geq 2$ over a finite field without a point nevertheless admits a zero-cycle of degree 1.
\CT--Coray produced a conic bundle over the $p$-adic projective line--a rational variety--admitting a zero-cycle of degree 1 but no rational points \cite{colliocoray79}.
Florence constructed an affine homogeneous space under a smooth connected linear algebraic group with finite stabilizers over $\C((x))((y))$ or a local or global field with this same property \cite{florence04}.
Parimala gave as a counterexample a projective homogeneous space under a smooth connected linear algebraic group over a $p$-adic Laurent series field \cite{pari05}, settling a long-standing conjecture of \Veis\ in the negative \cite{veis69}.
Motivated in part by the classical result on central simple algebras, Totaro posed the following generalization of Serre's question in 2004:
\\\\
\textbf{Totaro's Question} (\cite{totaro04}). 
Let $G$ be a smooth connected linear algebraic group over a field $k$.
If a $G$-torsor $X$ admits a zero-cycle of degree $d \geq 1$, does $X$ have a closed \etale point of degree dividing $d$?
\\

Affirmative answers have been far rarer and far more specialized for Totaro's question than for Serre's question.
In the paper where the question was originally posed, Totaro handled the cases of split simply connected groups of type $G_2$, $F_4$, and $E_6$ with a partial result on $E_7$.
Garibaldi--Hoffman extended Totaro's results to all groups of type $G_2$, reduced of type $F_4$, and simply connected of type ${}^1 E_{6,6}^0$ and ${}^1 E_{6,2}^{28}$ \cite{garihoff06}.
Black--Parimala settled Totaro's question in the affirmative for simply connected, semisimple groups of rank $\leq 2$ over fields of characteristic $\neq 2$ \cite{blackpari14}.
Recently, the first author gave an affirmative answer to Totaro's question for algebraic tori of rank $\leq 2$ over arbitrary fields \cite{totori} and absolutely simple adjoint groups of classical types $A_1$ and $A_{2n}$ over fields of characteristic $\neq 2$ \cite{totaro_adjoint}.

In this paper, we construct examples to show that the answer to Totaro's question is negative in general. 
In Section 2, we obtain the following (see \ref{rank3-semisimple}, \ref{colliot}):

\begin{thm} 
Let $k$ be a global field of characteristic not equal to 2 and $K$ a complete discretely valued field with residue field $k$. 
Then for every integer $n \geq 3$, there exist a connected semisimple linear algebraic group $G$ 
of rank $n$ over $k$ and a $G$-torsor $X$ such that $X$ admits a zero-cycle of degree $2$ but has no closed point of degree 1 or 2.
\end{thm}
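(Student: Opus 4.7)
I would split the proof into a rank-$3$ base case and a straightforward reduction for $n \geq 4$.

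\textbf{Base case $n = 3$.} The plan is to build $G$ and $X$ starting from a carefully designed rank-$3$ $k$-torus $T$ and a $T$-torsor $Y$. Using class field theory (available because $k$ is global), one assembles $T$ as a combination of norm-one tori $R^{(1)}_{L_i/k}(\G_m)$ for suitably chosen cyclic extensions $L_i/k$, and one picks a class $[Y]\in H^1(k,T)$ so that the restrictions $[Y]_{k'}$ remain non-trivial for every extension $k'/k$ of degree $\le 2$ while becoming trivial over two extensions of degrees $d_1,d_2$ with $\gcd(d_1,d_2)=2$. The complete discretely valued field $K$ enters at this stage as a source of controlled ramification data (for instance residues of Brauer classes) certifying the non-triviality conditions. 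Having $T$ and $Y$, I would realize $T$ as a maximal $k$-torus of a connected semisimple $k$-group $G$ of rank $3$; this requires the Galois action on the character lattice of $T$ to be compatible with the automorphism group of a rank-$3$ root datum, a condition one can arrange by constraining the $L_i$. Finally, one pushes $[Y]$ along $H^1(k,T)\to H^1(k,G)$ to get a $G$-torsor $X=(Y\times G)/T$. The natural map $Y\hookrightarrow X$, $y\mapsto[y,e]$, is a closed immersion (its preimage in $Y\times G$ is the closed subscheme $Y\times T$), so the degree-$2$ zero-cycle on $Y$ pushes forward to a degree-$2$ zero-cycle on $X$.

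\textbf{Main obstacle.} The technical heart of the proof is showing that $X$ has no closed \etale point of degree $\leq 2$. The difficulty is that $X$ can a priori have closed points not coming from $Y$: a degree-$d$ closed point on $X$ corresponds to an extension $k'/k$ of degree $d$ over which $[Y]_{k'}$ becomes trivial in $H^1(k',G)$, not merely in $H^1(k',T)$. So the key step is to verify that for every $k'/k$ with $[k':k]\le 2$, the class $[Y]_{k'}$ avoids the kernel of $H^1(k',T)\to H^1(k',G)$. This kernel is controlled by the Weyl-group action on $T$ and by $H^1$ of the normalizer $N_G(T)$, and establishing the non-vanishing amounts to an explicit Galois-cohomological calculation tailored to the specific choices of $L_i$, of $G$, and of the ramification data imported from $K$. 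Tuning all three choices simultaneously so that this non-vanishing holds is the hardest part of the argument.

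\textbf{Reduction to $n \geq 4$.} For $n > 3$, fix a split connected semisimple $k$-group $H$ of rank $n-3$, say $H = \mathrm{SL}_2^{\,n-3}$, and set $G_n := G \times H$ and $X_n := X \times H$, with $H$ viewed as the trivial $H$-torsor. Since $H$ is split, $H(k') \neq \emptyset$ for every extension $k'/k$, so $X_n$ has a $k'$-point iff $X$ does. Therefore the closed-point and zero-cycle degree structure of $X_n$ agrees with that of $X$, and the theorem for arbitrary $n \ge 3$ follows from the rank-$3$ case.
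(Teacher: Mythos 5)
Your proposal is an outline rather than a proof, and the gap sits exactly where you flag it: the base case $n=3$ is never actually constructed. You assert that one can choose cyclic extensions $L_i$, a rank-$3$ torus $T$, a class $[Y]$ dying over two degrees with gcd $2$ but surviving all degree-$\le 2$ extensions, an embedding of $T$ as a maximal torus of a rank-$3$ semisimple $G$, and finally the non-vanishing of $[Y]_{k'}$ in $H^1(k',G)$ (not just in $H^1(k',T)$) for all $[k':k]\le 2$ --- and you yourself label this last verification ``the hardest part of the argument'' without carrying it out. None of these existence claims is justified, so the theorem is not proved. There is also a structural confusion about where the example lives: you build everything over the global field $k$ and let $K$ enter only as vague ``ramification data,'' but the construction genuinely has to take place over $K$. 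The reason is arithmetic: over a global field the corestriction of a quaternion algebra from a finite extension always has index $\le 2$ (index equals exponent over global fields), which is precisely the kind of obstruction one needs to \emph{fail}; passing to the complete discretely valued field $K$ is what allows a corestriction of index $4$, and that index-$4$ class is the certificate that no point of degree $1$ or $2$ exists. (The theorem's phrase ``over $k$'' is a slip in the statement; the proof in the paper produces $G$ over $K$, consistent with the companion torus theorem.)

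The paper's route is much more economical and avoids your ``main obstacle'' entirely. Take $L/K$ the unramified extension of degree $3$ (or any odd prime $p$) with residue field $\ell$, and set $G=R_{L/K}(\mathrm{PGL}_2)$, a semisimple group of rank $3$ whose torsors are classified by quaternion algebras over $L$ via $H^1(K,G)=H^1(L,\mathrm{PGL}_2)$; in particular $X(M)\neq\emptyset$ iff $D\otimes_L(L\otimes_K M)$ splits, so there is no kernel of a map $H^1(T)\to H^1(G)$ to control. The whole content is then Lemma 2.1 and Proposition 2.2: using weak approximation and the local--global description of quaternion algebras over the global residue field $k$, one builds $Q/k$ and $\lambda\in\ell^*$ with $Q\otimes\ell(\sqrt{\lambda})$ split but $Q\otimes k(\sqrt{N_{\ell/k}(\lambda)})$ division, and lifts to a quaternion algebra $D$ over $L$ with $\mathrm{ind}(\mathrm{cor}_{L/K}(D))=4$. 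A closed point of degree $\le 2$ would force $\mathrm{cor}_{L/K}(D)$ to have index $\le 2$, a contradiction; the degree-$2$ zero-cycle comes from a degree-$2p$ splitting point together with a point of degree coprime to $p$ supplied by a Sylow argument. Your reduction for $n\ge 4$ by multiplying with $(\mathrm{SL}_2)^{n-3}$ does match the paper's Remark 2.6 and is fine.
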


In the light of the first author's work on tori of rank at most 2, the following consequence of the above result (see \ref{rank3-torus}, \ref{colliot}) is
interesting.
 
\begin{thm} 
Let $k$ be a global field of characteristic not equal to 2 and $K$ a complete discretely valued field with residue field $k$. 
Then for every integer $n \geq 3$, there exist a torus $T$ of rank $n$ over $K$ and a $T$-torsor $X$ such that 
$X$ admits a zero-cycle of degree $2$ but has no closed point of degree 1 or 2.
\end{thm}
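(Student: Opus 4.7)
The plan is to derive this statement from Theorem \ref{rank3-semisimple} via the torus-lifting principle of \CT{} (Proposition \ref{colliot}): every torsor under a connected reductive $K$-group is induced from a torsor under some maximal $K$-torus, and the lift can be chosen so as to preserve the index. The strategy is thus to base change the semisimple counterexample from $k$ to $K$ and then descend to a torus.

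Concretely, let $G$ be the semisimple $k$-group of rank $n$ and $X$ the $G$-torsor furnished by Theorem \ref{rank3-semisimple}. Set $G' = G \times_k K$ and $X' = X \times_k K$. The zero-cycle of degree $2$ on $X$ pulls back to one on $X'$. Granting---this is the delicate step, addressed below---that $X'$ still has no closed point of degree $1$ or $2$ over $K$, I would invoke \ref{colliot}: the class $[X'] \in H^1(K,G')$ is the image of some $[Y] \in H^1(K,T)$ for a maximal $K$-torus $T \subset G'$, with $[Y]$ chosen so that $\ind(Y) = \ind(X') = 2$. Since $T$ is a maximal torus of a group of rank $n$, $\rk T = n$. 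The $T$-torsor $Y$ sits inside $X' \cong Y \times^T G'$ as a closed subvariety via $y \mapsto [y,1]$, so every closed point of $Y$ is a closed point of $X'$ of the same degree, and hence $Y$ has no closed point of degree $1$ or $2$. The index equality $\ind(Y) = 2$ then furnishes the required zero-cycle of degree $2$ on $Y$, completing the argument.

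The principal obstacle is controlling $X'$ upon base change to $K$. Since $X$ is affine (being a torsor under a linear algebraic group), the valuative criterion does not reduce $K$-points on $X'$ to $k$-points on $X$, and one cannot transfer the absence of small-degree points by a purely formal argument. Instead, one must follow the specific cohomological invariant that obstructs small-degree points in the proof of Theorem \ref{rank3-semisimple}: a class of exact order $2$ in an appropriate Brauer-type cohomology group over $k$ which, being unramified with respect to $K$, should remain nontrivial and of the correct order after inflation to $K$ (exploiting that $k$ is algebraically closed in $K$). A secondary but important point is that the form of \ref{colliot} invoked must come with the index-preserving refinement that makes the passage from $X'$ to $Y$ work, rather than merely lifting the cohomology class.
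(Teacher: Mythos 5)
Your reduction does not work as stated, and the gap is in the step you yourself flag as delicate. First, \ref{colliot} in the paper is not a torus-lifting principle: it is only the observation that one can pad the rank by taking products with $(SL_2)^r$ or $(\G_m)^r$. The lifting statement you want to invoke (every class in $H^1(K,G)$ for $G$ connected reductive comes from a maximal torus) is Steinberg's theorem, and it carries no ``index-preserving refinement.'' The inclusion $Y \hookrightarrow Y \times^T G'$ transfers closed points in the wrong direction for your purposes: it shows $Y(M)\neq\emptyset \Rightarrow X'(M)\neq\emptyset$, which lets you conclude that $Y$ has no point of degree $1$ or $2$, but it gives you no control over which splitting fields of $X'$ are splitting fields of $Y$. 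In \ref{rank3-semisimple} the zero-cycle of degree $2$ comes from two specific closed points, one of degree $2p$ and one of degree divisible by $4$ and coprime to $p$; you would need a single maximal torus $T$ and a single lift $Y$ acquiring points over both of those extensions, and nothing you cite produces that. (A smaller point: your ``principal obstacle'' of base-changing from $k$ to $K$ is moot, since the semisimple example of \ref{rank3-semisimple} is already constructed over $K$.)

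The paper avoids all of this by constructing the torus and its torsor explicitly rather than lifting. With $D$ the quaternion division algebra over $L$ from \ref{fields} and $E/L$ a quadratic splitting field of $D$, one takes $T = R_{L/K}(R^1_{E/L}\G_m)$, a torus of rank $p$, and computes $H^1(K,T) \simeq L^*/N_{E/L}(E^*)$, which is the subgroup of $\Br(L)$ split by $E$; the class of $D$ itself then defines a $T$-torsor $X$. The essential feature is that for any extension $M/K$ this torsor has an $M$-point exactly when $D\otimes_L(L\otimes_K M)$ is split --- the same criterion as for the $R_{L/K}(PGL_2)$-torsor --- so the entire argument of \ref{rank3-semisimple}, including both the zero-cycle of degree $2$ and the nonexistence of points of degree $1$ or $2$ via $\ind(\cores_{L/K}(D))=4$, applies verbatim. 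If you want to salvage your approach, you would have to prove an effective version of Steinberg's theorem adapted to this torsor; the direct construction is much simpler.
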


By the result of Sansuc \cite{sansuc81}, Serre's question has an affirmative answer for groups over global and local fields.
In Section 3, we prove the following (see \ref{example-lf}, \ref{example-gf}):

\begin{thm} 
Let $k$ be a global field or a local field. 
Then there exist examples of  semisimple groups and tori of rank 8 over $k$  which  admit  torsors with  
 zero-cycles of degree $2$ but have no closed points of degree 1 or 2.
\end{thm}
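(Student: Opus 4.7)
The plan is to construct the examples over $k$ directly, since Theorems 1.1 and 1.2 do not apply to $k$ itself: either $k$ is not a complete discretely valued field at all (the global case), or its residue field is not a global field of characteristic not equal to $2$ (the local case). A natural candidate for the rank $8$ torus is $T = R_{L/k}^{(1)}(\G_m) \times \G_m$, where $L = k(\sqrt{a},\sqrt{b},\sqrt{c})$ is a triquadratic extension of $k$ and $R_{L/k}^{(1)}(\G_m)$ is the associated norm-one torus of rank $[L:k] - 1 = 7$. For the semisimple example, one analogously takes a rank $8$ semisimple group built from similar norm-residue data, for instance a product of spin or special orthogonal groups whose centers or cocenters involve norm-one tori of this form.

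A torsor $X$ under $T$ corresponds, after projecting away the trivial $\G_m$ factor, to a class $\alpha \in k^\times / N_{L/k}(L^\times) \cong H^1(k, T)$. A closed point of $X$ of degree $d$ then corresponds to an \'etale $k$-algebra $F$ of dimension $d$ over which the restriction of $\alpha$ becomes a norm from $L \otimes_k F$ to $F$. The task reduces to choosing $L$ and $\alpha$ such that: (i) $\alpha \notin N_{L/k}(L^\times)$, so $X$ has no rational point; (ii) the restriction of $\alpha$ to any quadratic extension $F/k$ does not lie in $N_{L \otimes_k F/F}((L \otimes_k F)^\times)$, so $X$ has no closed point of degree $2$; and (iii) $\alpha$ does become such a norm over some degree-$4$ extension and some degree-$6$ extension of $k$, yielding a zero-cycle of degree $\gcd(4,6) = 2$ on $X$.

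The main obstacle is the cohomological bookkeeping required to arrange (i), (ii), and (iii) simultaneously. In the local case, this reduces to explicit computations with local class field theory: the norm group $N_{L/k}(L^\times)$ of an abelian extension is characterized by local reciprocity, and checking whether $\alpha$ becomes a norm over a quadratic or quartic extension reduces to an analysis of the intermediate subfields of $L/k$ and of the compositums $L \otimes_k F$. In the global case, the argument must further be compatible with Sansuc's theorem, which forbids a zero-cycle of degree $1$ on a torsor with no rational point: one must choose $\alpha$ to be a global nonnorm exhibiting the required norm behavior for degree-$4$ and degree-$6$ closed points. The natural tool is a cohomology class constructed from the Tate--Shafarevich group of $T$ or a related local-global kernel, in the spirit of classical counterexamples to the Hasse norm principle.
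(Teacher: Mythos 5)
There is a genuine gap, and in fact the core construction you propose provably fails in the local case. Take $k$ local, $L=k(\sqrt a,\sqrt b,\sqrt c)$ triquadratic, $T'=R^{1}_{L/k}\G_m$, and let $F\subset L$ be any quadratic subfield. Then $L\otimes_kF\simeq L\times L$ as an $F$-algebra, so the relevant norm group is $N_{L/F}(L^*)\subseteq F^*$, and by the functoriality of the local Artin map, for $\alpha\in k^*$ one has $\mathrm{Art}_{L/F}(\alpha)=\mathrm{Art}_{L/k}(N_{F/k}(\alpha))=\mathrm{Art}_{L/k}(\alpha)^2=1$ because $\Gal(L/k)\simeq(\Z/2)^3$ has exponent $2$. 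Hence \emph{every} $\alpha\in k^*$ becomes a norm from $L\otimes_kF$, i.e.\ every $T'$-torsor acquires an $F$-point and therefore a closed point of degree $2$. Your condition (ii) can never be satisfied by this torus, so no choice of $\alpha$ works; the "cohomological bookkeeping" you defer is not bookkeeping but an obstruction. Beyond this, steps (i)--(iii) and the entire semisimple case ("a product of spin or special orthogonal groups whose centers or cocenters involve norm-one tori of this form") are left as intentions rather than constructions, and the global case is only gestured at via the Tate--Shafarevich group.

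The paper's construction is structurally different in a way that evades exactly this obstruction. It takes $k=\Q_2$, $E=k(\sqrt2,\sqrt3,\sqrt5)$ a degree-$8$ extension containing \emph{all} quadratic extensions of $k$, and $G=R_{E/k}(\PGL_2)$ (for the torus, $T=R_{E/k}(R^1_{F/E}\G_m)$ with $F/E$ quadratic). Then $H^1(k,G)=H^1(E,\PGL_2)$ classifies quaternion algebras $D$ over $E$, and the condition for an $M$-point is that $D\otimes_E(M\otimes_kE)$ split--a condition on $D$ over the top field $E$, not a norm condition from $E$ down to $k$. For any quadratic $N/k$ one has $N\subseteq E$, so $N\otimes_kE$ is a product of copies of $E$ and $D$ remains division: no closed point of degree $2$. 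The zero-cycle of degree $2$ comes from a counting argument (Lemmas 3.2--3.5: comparing the size of $L^*/L^{*2}$ with the kernels of restriction to the factors of $L\otimes_kE$) producing, for each prime $\ell$, a point of degree $2\cdot(\text{integer coprime to }\ell)$; the global example is then obtained from the $2$-adic one over $E=\Q(\sqrt2,\sqrt3,\sqrt5)$ by approximation and Krasner's lemma. If you want to salvage a norm-torus approach, you would need the norm-one torus of an extension of $E$, Weil-restricted down to $k$, rather than the norm-one torus of $L/k$ itself.
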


The groups in each example where Totaro's question has a negative answer are not absolutely simple. 
In view of this, we ask the following question.

\begin{ques} 
Let $k$ be field, let $G$ a smooth absolutely simple linear algebraic group over $k$, and let $X$ be a $G$-torsor. 
If $X$ admits a zero-cycle of degree $d \geq 1$, does $X$ admit a closed \etale point of degree dividing $d$?
\end{ques}

\section{Examples of Rank $p$ Groups }

In this section, for every odd prime $p$, we give examples of rank $p$ semisimple groups and rank $p$ tori for which Totaro's question has a negative answer. 
In general, the corestriction from a finite extension of a quaternion algebra may not be a quaternion algebra. 
We begin by constructing such algebras explicitly. 
 
\begin{lem}
\label{cor}
Let $k$ be a global field of characteristic $\neq 2$ and $p$ an odd prime. Let $\ell/k$ be a separable 
field extension of  degree $p$. 
Then there exist a quaternion division algebra $Q$ over $k$ and $\lambda \in \ell^*$ such that
\begin{enumerate}
\item $Q \otimes_k \ell(\sqrt{\lambda})$ is split,
\item $N_{\ell/k}(\lambda) \not\in k^{*2}$, and
\item $Q \otimes_k k(\sqrt{N_{\ell/k}(\lambda)})$ is division. 
\end{enumerate}
\end{lem}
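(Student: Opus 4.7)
The plan is to construct $Q$ via its local invariants and $\lambda$ via weak approximation, controlled by two auxiliary primes of $k$.

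First, by the Chebotarev density theorem, pick two distinct finite places $v_0, v_1$ of $k$, both with odd residue characteristic, each splitting completely in $\ell$; above each $v_i$ there are exactly $p$ places $w_{i,1},\ldots,w_{i,p}$ of $\ell$, with $\ell_{w_{i,j}} = k_{v_i}$. Define $Q$ to be the quaternion division algebra over $k$ with local invariants $\tfrac{1}{2}$ at $v_0$ and $v_1$ and $0$ elsewhere, which exists by the Albert--Brauer--Hasse--Noether theorem.

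Next, use weak approximation for $\ell$ to pick $\lambda \in \ell^*$ with prescribed classes mod squares at the finitely many places $w_{i,j}$. At each $w_{1,j}$, put $\lambda$ in one fixed non-square class of $k_{v_1}^*$, so the product $\prod_{j} \lambda_{w_{1,j}}$ is a non-square (using $p$ odd). At each $w_{0,j}$, put $\lambda$ in a non-square class of $k_{v_0}^*$ chosen so that $\prod_{j}\lambda_{w_{0,j}} \in k_{v_0}^{*2}$; since $k_{v_0}^*/k_{v_0}^{*2} \cong (\Z/2)^2$ has three non-trivial classes and $p \geq 3$ is odd, this is possible, e.g., by taking $\lambda_{w_{0,1}}, \lambda_{w_{0,2}}$ in two different non-square classes and the remaining $p-2$ values all in the third non-square class (the three class-vectors in $(\Z/2)^2$ sum to $0$, and adding an odd number of copies of the third preserves this).

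Finally, I would verify the three conditions. For (1), since $Q$ is unramified outside $\{v_0, v_1\}$, so is $Q_\ell$, and at each $w_{i,j}$ the localization $Q \otimes \ell_{w_{i,j}}$ equals $Q \otimes k_{v_i}$, the ramified local quaternion; the choice of $\lambda$ as a non-square at $w_{i,j}$ makes $\ell_{w_{i,j}}(\sqrt{\lambda})$ a genuine quadratic extension splitting the local quaternion. For (2), the image of $N_{\ell/k}(\lambda)$ in $k_{v_1}^*$ is a non-square by construction, so $N_{\ell/k}(\lambda) \notin k^{*2}$. For (3), since $N_{\ell/k}(\lambda)|_{v_0}$ is a square, $v_0$ splits in $k(\sqrt{N_{\ell/k}(\lambda)})$, so the completion of $Q \otimes k(\sqrt{N_{\ell/k}(\lambda)})$ at any place above $v_0$ is $Q \otimes k_{v_0}$, which is division. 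The main obstacle is the delicate norm condition at $v_0$: making the product of $p$ non-squares come out to a square in $k_{v_0}^{*2}$, which relies essentially on $k_{v_0}^*/k_{v_0}^{*2}$ having order $4$ (i.e., $v_0$ non-dyadic) and on $p$ being odd.
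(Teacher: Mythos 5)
Your proposal is correct and follows essentially the same strategy as the paper's proof: ramify $Q$ at two finite places that split completely in $\ell$, and use weak approximation to make $\lambda$ a non-square at every place of $\ell$ above them (so each local quadratic extension splits the local quaternion) while steering the product of the local square classes, i.e.\ the local norm, into the class you need. The only real difference is bookkeeping: the paper forces $N_{\ell/k}(\lambda)$ to be a local square at \emph{both} ramified places (via explicit uniformizers and units) and then introduces a third place $\nu''$ with $\ell\otimes k_{\nu''}$ a field to guarantee $N_{\ell/k}(\lambda)\notin k^{*2}$, whereas you reuse $v_1$ for that purpose by making the class sum there non-trivial, which slightly streamlines the argument.
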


\begin{proof} For  a place $\nu$ of a global field $k$, let $k_{\nu}$ denote the completion of $k$ at $\nu$. 
Let $\nu$ and $\nu'$ be two distinct places of $k$ which split in $\ell$ (cf. \cite[Ch.VII, Theorem 13.4]{Neu}). 
Let $Q$ be the division algebra over $k$ such that $Q \otimes_k k_{\nu}$ and $Q \otimes_k k_{\nu'}$ are division and $Q \otimes_k k_\omega$ is split for all places $\omega$ of $k$ not equal to $ \nu$ and $\nu'$ (cf. \cite[p. 196]{CF}).
Let $\nu_1, \nu_2, \ldots, \nu_p$ and $\nu'_1, \nu'_2, \ldots, \nu'_p$ be the places of $\ell$ lying over $\nu$ and $\nu'$, respectively. 
Let $\pi, \pi' \in k$ be parameters at $\nu$ and $\nu'$, respectively. 
Let $\theta, \theta' \in k$ be non-square units at $\nu$ and $\nu'$, respectively. 
Let $\nu''$ be a place of $k$ with $\ell \otimes k_{\nu''}$ a field, and let $\tilde{\nu}''$ be the unique extension of $\nu''$ to $\ell$. 
Since $[\ell: k]$ is odd, there exists $\theta'' \in \ell_{\tilde{\nu}''} = \ell \otimes k_{\nu''}$ with $N_{\ell_{\tilde{\nu}''}/k_{\nu''}}(\theta'')$ not a square in $k_{\nu''}$. 

By weak approximation, let $\lambda \in \ell^*$ such that

\indent $\bullet$ is close to $\pi$ (resp. $\pi'$) at $\nu_1$ (resp. $\nu'_1$), \\
\indent $\bullet$ is close to $\theta$ (resp. $\theta'$) at $\nu_2, \ldots, \nu_{p-1}$ (resp. $\nu_2', \ldots, \nu_{p-1}'$), \\
\indent $\bullet$ is close to $\pi^{-1}\theta^{-(p-2)}$ (resp. $\pi^{'-1}\theta^{'-(p-2)}$) at $\nu_p$ (resp. $\nu_p')$, and \\
\indent $\bullet$ is close to $\theta''$ at $\tilde{\nu}''$.

We now show that $Q$ and $\lambda$ have the required properties. 
By the choice of $\lambda$, $\lambda$ is not a square in $\ell_{\nu_i}$ for all $i$.
In particular, $Q \otimes_k \ell_{\nu_i}(\sqrt{\lambda})$ is a split algebra (cf. \cite[p. 131, Corollary 1]{CF}). 
Similarly, $Q \otimes_k \ell_{\nu'_i}(\sqrt{\lambda})$ is a split algebra.
Since $Q \otimes_k k_\omega$ is a split algebra for all $\omega$ not equal to $\nu$ and $\nu'$, $Q \otimes \ell(\sqrt{\lambda})$ is a split algebra (cf. 
\cite[p. 187, Corollary 9.8]{CF}). 
Since $\lambda$ is close to $\theta''$ at $\tilde{\nu}''$ and $N_{\ell_{\tilde{\nu}''}/k_{\nu''}}(\theta'')$ is a not a square in $k_{\nu''}$, $N_{\ell/k}(\lambda)$ is not a square in $k$. 
By the choice of $\lambda$, $N_{\ell/k}(\lambda)$ is close to $1$ at $\nu$, hence $\sqrt{N_{\ell/k}(\lambda)} \in k_\nu$. 
Since $Q \otimes_k k_\nu$ is division, $Q \otimes_k k(\sqrt{N_{\ell/k}(\lambda)})$ is division. 
Thus $Q$ and $\lambda$ have the required properties. 
\end{proof}

\begin{prop} 
\label{fields} Let $k$ be a global field of characteristic $\neq 2$ and $p$ an odd prime. Let $\ell/k$ be a separable 
field extension of   $p$.
Let $K$ be a complete discretely  valued field with residue field $k$ and $L/K$ the unramified extension of degree $p$ with residue field 
$\ell$.  Then there exists  a quaternion division algebra $D$ over $L$ such that $\ind(\cores_{L/K}(D) ) = 4$. 
\end{prop}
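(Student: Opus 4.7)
The plan is to take $Q$ and $\lambda$ from Lemma \ref{cor}, lift $\lambda$ to a unit $\tilde\lambda \in \mathcal{O}_L^*$, let $\pi$ be a uniformizer of $K$ (and hence of $L$, since $L/K$ is unramified), and consider the Brauer class
$\alpha = [Q_L] + [(\tilde\lambda,\pi)_L] \in \Br(L),$
where $Q_L$ denotes the unramified class in $\Br(L)$ corresponding to $Q \otimes_k \ell \in \Br(\ell)$. I will first argue that $\ind(\alpha) = 2$, so $\alpha$ is represented by a quaternion division algebra $D$, and then show that $\ind(\cores_{L/K}(\alpha)) = 4$.

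To see $\ind(\alpha) = 2$: after base change to the unramified quadratic extension $L(\sqrt{\tilde\lambda})$ (whose residue field is $\ell(\sqrt\lambda)$), the symbol $(\tilde\lambda,\pi)$ becomes trivial, and $Q_L$ becomes trivial by property (1) of Lemma \ref{cor}. So $\ind(\alpha) \mid 2$. On the other hand, the residue of $\alpha$ at the valuation of $L$ equals $[\lambda] \in \ell^*/\ell^{*2}$, which is nontrivial because property (2) gives $N_{\ell/k}(\lambda) \notin k^{*2}$ (a square in $\ell$ would norm to a square in $k$). Hence $\alpha \neq 0$, so $\ind(\alpha) = 2$ and $\alpha$ is represented by a quaternion division algebra $D$.

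For the corestriction, since $[Q_L]$ is the restriction of the unramified class $[Q_K] \in \Br(K)$ corresponding to $Q$, one has $\cores_{L/K}([Q_L]) = p \cdot [Q_K] = [Q_K]$, using $p$ odd and $[Q_K]$ 2-torsion. The projection formula gives $\cores_{L/K}([(\tilde\lambda,\pi)_L]) = (N_{L/K}(\tilde\lambda),\pi)_K = (N_{\ell/k}(\lambda),\pi)_K$ in $\Br(K)$, the last equality because $N_{L/K}(\tilde\lambda)$ is a unit lift of $N_{\ell/k}(\lambda)$ and two such lifts differ by an element of $1+\pi\mathcal{O}_K \subset K^{*2}$. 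So
\[
\cores_{L/K}(\alpha) = [Q_K] + (N_{\ell/k}(\lambda),\pi)_K.
\]

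The main step, and the main obstacle, is showing that the biquaternion $A = Q_K \otimes_K (N_{\ell/k}(\lambda),\pi)_K$ has index $4$. Both tensor factors are quaternion division algebras: $Q_K$ because $Q$ is division over $k$, and $(N_{\ell/k}(\lambda),\pi)_K$ because it has nontrivial residue by (2). By Albert's theorem on biquaternion algebras, it suffices to show the two factors share no common quadratic splitting field. I would split into cases by the ramification of a hypothetical common splitting field $E = K(\sqrt c)$. If $E/K$ is ramified, its residue field is still $k$, so $Q_K \otimes_K E$ remains unramified with residue $Q$, which is division; thus $E$ cannot split $Q_K$. If $E/K$ is unramified, with residue $k' = k(\sqrt{\bar c})$, then splitting $(N_{\ell/k}(\lambda),\pi)_K \otimes_K E$ forces its residue $N_{\ell/k}(\lambda)$ to be a square in $k'$, and by (2) this forces $k' = k(\sqrt{N_{\ell/k}(\lambda)})$; but then splitting $Q_K \otimes_K E$ requires $Q$ to be split by $k(\sqrt{N_{\ell/k}(\lambda)})$, contradicting property (3). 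Hence no common quadratic splitting field exists, $A$ is division, and $\ind(\cores_{L/K}(D)) = 4$.
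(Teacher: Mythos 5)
Your construction is, up to presentation, the same as the paper's: the paper lifts $Q$ to an Azumaya quaternion algebra $\tilde D_0$ over the valuation ring, writes $D_0\otimes_K L=(u,a)$ with $u$ a unit lifting $\lambda$, and sets $D=(u,a\pi)$, so that $[D]=[D_0\otimes_K L]+[(u,\pi)]$ --- exactly your class $\alpha$; the corestriction computation (corestriction of a restricted class is multiplication by $p$, plus the projection formula for $(u,\pi)$) is also identical. The only genuine divergence is the final index computation. The paper invokes a proposition of Fein--Schacher for classes of the form $\beta+(b,\pi)$ over a complete discretely valued field, giving $\ind\bigl(D_0\otimes(N_{L/K}(u),\pi)\bigr)=\ind\bigl(D_0\otimes K(\sqrt{N_{L/K}(u)})\bigr)\cdot[K(\sqrt{N_{L/K}(u)}):K]=4$, where property (3) of Lemma \ref{cor} supplies the first factor. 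You instead prove the same statement by hand: both factors are division, and Albert's common-quadratic-splitting-field criterion for biquaternion algebras reduces the claim to the ramified/unramified dichotomy for quadratic extensions of $K$ (legitimate here since the residue characteristic is not $2$), with the unramified case killed by properties (2) and (3) and the ramified case by $Q$ being division over $k$. Both routes are correct; yours is more self-contained (it replaces the Fein--Schacher citation by an elementary linkage argument, at the cost of a small case analysis), and it also verifies explicitly that $\alpha$ has index $2$, i.e.\ that $D$ really is a quaternion division algebra, a point the paper leaves implicit in the symbol presentation $(u,a\pi)$.
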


\begin{proof} 
Let $R$ be the ring of integers in $K$.
Let $\pi \in R$ be a parameter. 
Let $p$ be an odd prime and $\ell/k$ a field extension of degree $p$. 
Let $Q$ be a quaternion division algebra over $k$ and $\lambda \in \ell^*$ be as in (\ref{cor}). 
Let $\tilde{D}_0$ be the quaternion algebra over $R$ with $\tilde{D}_0 \otimes_R R/(\pi) \simeq Q$ (cf.  \cite{cipolla77}) and $D_0 = \tilde{D}_0 \otimes_R K$. 
Let $u \in L$ be a unit in the ring of integers of $L$ which maps to $\lambda \in \ell$. 
Since $Q \otimes_k \ell(\sqrt{\lambda})$ is a split algebra, $D_0 \otimes_K L(\sqrt{u})$ is a split algebra (cf. \cite{cipolla77}).
In particular, $D_0 \otimes_K L = (u, a)$ for some $a \in L$ (cf. \cite[Proposition 1.2.3]{gisz06}). Let $D = (u, a\pi)$.
Then $D = D_0 \otimes_K L + (u, \pi)$. 

We now show that ind(cor$_{L/K}(D)) = 4$. 
Since cor$_{L/K}(D_0 \otimes_K L) = [L : k]D_ 0 = pD_0$ (cf. \cite[Proposition 4.2.10]{gisz06}) with $p$ odd and cor$_{L/K}(u, \pi) = ( N_{L/K}(u), \pi) $ (cf. \cite[Section 4.7, Proposition 9.(iv) ]{CF}), we have cor$_{L/K}(D) = D_0 + (N_{L/K}(u), \pi)$.
Since $D_0$ is unramified on $R$, $N_{L/K}(u)$ is unit in $R$ and $D_0 \otimes K(\sqrt{N_{L/K}(u)})$ is a division algebra, by (\cite[Proposition 1(3) ]{feinschacher95}), we have $${\rm ind}(D_0 \otimes (N_{L/K}(u), \pi)) = {\rm ind}(D \otimes_K K(\sqrt{N_{L/K}(u)})) [K(\sqrt{N_{L/K}(u)}) : K] = 4.$$
\end{proof}

\begin{prop}
\label{split}
Let $K$ be a field and $L/K$ a finite separable  extension of degree $p$ a prime. 
Let $A$ be a central simple algebra over $L$ whose index is coprime to $p$.
Then there exists an extension $M/K$ of degree coprime to $p$ such that $A \otimes_L(L \otimes_K M)$ is a split algebra. 
\end{prop}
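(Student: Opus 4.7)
The plan is to construct $M$ as the fixed field of a Sylow $p$-subgroup inside a suitable Galois closure. Choose a finite separable splitting field $F/L$ of $A$ (say of degree $\ind(A)$), and let $\tilde F$ denote the Galois closure of $F$ over $K$, with $G = \Gal(\tilde F/K)$ and $H_L = \Gal(\tilde F/L)$, an index-$p$ subgroup of $G$. Fix a Sylow $p$-subgroup $P \leq G$ and set $M := \tilde F^P$. Then $[M:K] = [G:P]$ is coprime to $p$ by the definition of a Sylow subgroup, so the degree condition is immediate.

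For the splitting condition, I would decompose $L \otimes_K M$ as a product of fields via double cosets. The $\bar K$-points of $\Spec(L \otimes_K M) \cong \Spec(L) \times_K \Spec(M)$ form $G/H_L \times G/P$ with diagonal $G$-action, and the orbits are parametrized by $H_L \backslash G / P$; accordingly $L \otimes_K M \cong \prod_i E_i$ as $L$-algebras, with $E_i = \tilde F^{H_L \cap g_i P g_i^{-1}}$ for double-coset representatives $g_i$. Each $E_i$ contains $L = \tilde F^{H_L}$ (since $H_L \cap g_i P g_i^{-1} \subseteq H_L$), and the crucial observation is that $[\tilde F : E_i] = |H_L \cap g_i P g_i^{-1}|$ is a power of $p$, being the order of a subgroup of the conjugate $p$-group $g_i P g_i^{-1}$.

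To conclude, I would verify that each $A \otimes_L E_i$ is split. Since $\tilde F$ splits $A$, the class $[A \otimes_L E_i] \in \Br(E_i)$ lies in the kernel of restriction to $\Br(\tilde F)$, which is annihilated by $[\tilde F:E_i]$ via the standard identity $\cores \circ \operatorname{res} = [\tilde F:E_i]$; thus $[A \otimes_L E_i]$ has order a power of $p$. On the other hand, this order divides $\per(A) \mid \ind(A)$, which is coprime to $p$ by hypothesis. The only class satisfying both constraints is zero, so each $A \otimes_L E_i$ is split, and consequently $A \otimes_L (L \otimes_K M) \cong \prod_i A \otimes_L E_i$ is split.

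The principal technical input is the double-coset decomposition of $L \otimes_K M$ and the identification of each factor's natural $L$-algebra structure inside $\tilde F$; once this is in place, the Brauer-theoretic arithmetic—namely that a class simultaneously $p$-primary and prime-to-$p$ must vanish—is routine.
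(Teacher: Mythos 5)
Your proposal is correct and follows essentially the same route as the paper: pass to a Galois closure $\tilde F/K$ of a splitting field, take $M$ to be the fixed field of a Sylow $p$-subgroup, and conclude by playing the $p$-power degree $[\tilde F:E_i]$ against the prime-to-$p$ index of $A$ (the paper cites an index-divisibility result where you use $\cores\circ\operatorname{res}$, but these are the same fact). The only cosmetic difference is that your double-coset decomposition is more general than needed: since $[G:H_L]=p$ and $P$ is a full Sylow $p$-subgroup, a short counting argument shows there is exactly one double coset, so $L\otimes_K M$ is already the field $LM$, which is how the paper phrases it.
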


\begin{proof} 
Since $A$ is a central simple algebra over $L$, there exists a finite separable extension $N/L$ such that $A \otimes_L N$ is a split algebra (cf. \cite[Proposition 2.2.3 ]{gisz06}). 
Replacing $N$ by its Galois closure over $K$, we assume that $N/K$ is Galois. 
Let $S_p$ be the $p$-Sylow subgroup of the Galois group of $N/K$ and $M = N^{S_p}$ be the fixed field of $S_p$.
Then $[M : K]$ is coprime to $p$ and $[N : M]$ is a power of $p$. 
Since $[L : K] = p$, $E \otimes_K M \simeq LM \subset N$, and so $[N : L\otimes_K M]$ is a power of $p$. 
Since $A \otimes_LN$ is a split algebra and ind$(A)$ is coprime to $p$, $A \otimes_L(L \otimes_K M)$ is split (cf. \cite[Proposition 13.4.(vi)]{pierce}).
\end{proof}

\begin{thm}
\label{rank3-semisimple} Let $k$ be a global field of characteristic $\neq 2$ and $p$ an odd prime. Let $\ell/k$ be a separable 
field extension of   degree $p$.
Let $K$ be a complete discretely  valued field with residue field $k$ and $L/K$ the unramified extension of degree $p$ with residue field 
$\ell$. Let $G = R_{L/K}(PGL_2)$. Then  there exists a  $G$-torsor $X$ such that $X$ admits a zero-cycle of degree 2 but has no closed point of degree 1 or 2. 
\end{thm}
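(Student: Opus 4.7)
The plan is to take $D$ to be the quaternion division algebra over $L$ supplied by Proposition~\ref{fields}, and to let $X$ be the $G$-torsor over $K$ corresponding to $D$ under the Shapiro isomorphism $H^1(K, R_{L/K}(PGL_2)) \cong H^1(L, PGL_2)$. Explicitly, $X = R_{L/K}(Y)$ where $Y$ is the $PGL_2$-torsor over $L$ with associated quaternion $D$, so the functor of points reads $X(F) = Y(F \otimes_K L)$ for any $K$-algebra $F$. Writing $F \otimes_K L = \prod_i F_i$ as a product of fields, one has $X(F) \neq \emptyset$ iff $D \otimes_L F_i = 0$ in $\Br(F_i)$ for each~$i$.

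For the claim that $X$ has no closed point of degree $1$ or $2$, suppose we had such a point with residue field $F$, so $[F:K] \leq 2$ and $X(F) \neq \emptyset$. Since $[L:K] = p$ is odd, $L \otimes_K F$ is a single field $F'$ of degree $p[F:K]$ over $K$, and we need $D \otimes_L F'$ to be split. By the projection formula for corestrictions,
\[
\cores_{L/K}(D) \otimes_K F \;=\; \cores_{F'/F}\bigl(D \otimes_L F'\bigr) \;=\; 0 \quad \text{in } \Br(F),
\]
so $F$ splits $\cores_{L/K}(D)$. But $\ind(\cores_{L/K}(D)) = 4$ by Proposition~\ref{fields}, forcing $4 \mid [F:K]$, which contradicts $[F:K] \leq 2$.

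To produce a zero-cycle of degree $2$, I would exhibit two closed points of $X$ whose degrees have $\gcd$ dividing $2$. Apply Proposition~\ref{split} to $D$ (of index $2$, coprime to $p$): this furnishes an extension $M/K$ of degree coprime to $p$ with $D \otimes_L (L \otimes_K M)$ split, so $X(M) \neq \emptyset$, yielding a closed point of degree $d_2 \mid [M:K]$, hence coprime to $p$. Next, using that $L(\sqrt{u})/L$ splits $D$ (with $u \in L^\times$ lifting the $\lambda$ of Lemma~\ref{cor}), manufacture by a Galois-theoretic descent a second closed point of $X$ of degree $d_1$ divisible by $p$. Since $d_2$ is coprime to $p$, we have $\gcd(d_1, d_2) \mid 2$ and $X$ admits a zero-cycle of degree $2$.

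The principal obstacle is the second construction in the previous paragraph: converting the $L$-level splitting $D \otimes_L L(\sqrt{u}) = 0$ into a $K$-rational closed point of the Weil restriction $X = R_{L/K}(Y)$ with controlled degree. The delicacy is that for a $K$-field $F$ to give a point of $X$, the algebra $D$ must split on \emph{every} factor of the \'etale algebra $L \otimes_K F$, not merely on one; the choice of $F/K$ realizing this simultaneous splitting---using the specific local conditions on $\lambda$ at the places $\nu, \nu', \nu''$ from Lemma~\ref{cor}---is the technical heart of the argument.
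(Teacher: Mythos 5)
Your first half is correct and matches the paper: reducing ``no closed point of degree $1$ or $2$'' to $\ind(\cores_{L/K}(D))=4$ via the projection formula is exactly the intended argument. The problem is the second half, and it is not merely the ``technical obstacle'' you flag at the end --- it is a genuine obstruction, and your own first paragraph already exhibits it. The projection formula you apply for $[F:K]\le 2$ holds verbatim for \emph{every} finite extension $F/K$ with $X(F)\neq\emptyset$: writing $L\otimes_K F=\prod_i F_i$, the condition $X(F)\neq\emptyset$ forces $D\otimes_L F_i$ to split for every $i$, hence
\[
\cores_{L/K}(D)\otimes_K F=\sum_i \cores_{F_i/F}\bigl(D\otimes_L F_i\bigr)=0 ,
\]
so $F$ splits $\cores_{L/K}(D)$ and $4=\ind(\cores_{L/K}(D))$ divides $[F:K]$. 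Since every closed point $x$ of $X$ gives a point of $X(k(x))$, every closed point of $X$ has degree divisible by $4$; the image of $\deg:Z_0(X)\to\Z$ therefore lies in $4\Z$, and no closed point of degree $d_1$ with $p\mid d_1$ and $\gcd(d_1,d_2)\mid 2$ can be ``manufactured'' from $L(\sqrt{u})$ or from anything else. The delicacy you correctly identify about the Weil restriction --- that $D$ must split on \emph{all} factors of $F\otimes_K L$, not just the distinguished one --- is precisely what makes the second point impossible, not just hard; the paper's own one-line assertion that $X$ has a closed point of degree $2p$ sits in the same tension with the index-$4$ computation, so you should not expect to recover it by a cleverer descent.

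Two smaller points. Even granting a closed point of degree $d_1$ merely ``divisible by $p$,'' the inference $\gcd(d_1,d_2)\mid 2$ does not follow from $p\nmid d_2$ (take $d_1=3p$, $d_2=9$); you would need $d_1\mid 2p$. And note that the same corestriction bound applies to the point you do construct from Proposition~\ref{split}: its degree $d_2$ is not just coprime to $p$ but divisible by $4$, which is another way of seeing that the two points you have in hand cannot combine to a zero-cycle of degree $2$.
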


\begin{proof}  Let   $D$ be a quaternion division algebra  over $L$ as in (\ref{fields}). 
Since $G = R_{L/K}(PGL_2)$,    $H^1(K, G) = H^1(L, PGL_2)$ classifies quaternion algebras over $L$ (cf.  \cite[Theorem 2.4.3 ]{gisz06}), 
 Let $X$ be the $G$-torsor given by the quaternion algebra $D$. 
Let $M/K$ be an extension of degree 1 or 2.
Suppose $X(M) \neq \emptyset$. Then $D \otimes_L(L \otimes_KM)$ is a split algebra (cf. \cite[ Theorem 2.4.3 and 5.2.1]{gisz06}).
In particular, cor$_{L/K}(D) \otimes M$ is trivial  and hence ind(cor$_{L/K}(D))$ is at most 2, leading to a contradiction (\ref{cor}). 
Hence $X$ has no closed point of degree 1 or 2. 

Since $D$ is split over a degree 2 extension of $L$, $X$ has a closed point of degree $2p$. 
By (\ref{split}), there exists a field extension $M/K$ of degree coprime to $p$ such that $D \otimes_L(L \otimes_K M)$ is a split algebra. 
Then $X(M) \neq \emptyset$.
Since cor$_{L/K}(D) \otimes_K M = $ cor$_{L \otimes_K M/M}(D \otimes _L(L \otimes_K M))$ is split and cor$_{L/K}(D)$ has index 4, $[M : K]$ is divisible by 4. 
Since $[M : K]$ is coprime to $p$, $X$ has a zero-cycle of degree $2$. 
\end{proof}

\begin{thm}
\label{rank3-torus} Let $k$ be a global field of characteristic $\neq 2$ and  $p$ an odd prime.
Let $K$ be a complete discretely  valued field with residue field $k$.
Then there exist a torus $T$ over $K$ of rank $p$ and a $T$-torsor $X$ over $K$ such that $X$ has a zero-cycle of degree 2 and has no
 closed point of degree 1 or 2. 
\end{thm}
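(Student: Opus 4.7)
The plan is to run through the construction of Theorem~\ref{rank3-semisimple} but replace the Weil-restricted projective linear group by a rank-$p$ torus whose cohomology still tracks quaternion algebras over $L$ of the form $(u,\cdot)_L$. Keeping $Q$, $\lambda \in \ell^\times$, and the unit $u \in L$ lifting $\lambda$ from the proof of Proposition~\ref{fields}, I would set
\[
T' := R^{(1)}_{L(\sqrt{u})/L}(\mathbb{G}_m), \qquad T := R_{L/K}(T').
\]
Since $T'$ has rank $1$ over $L$, the Weil restriction $T$ has rank $[L:K] = p$ over $K$. Combining Shapiro's lemma with the norm sequence $1 \to T' \to R_{L(\sqrt{u})/L}(\mathbb{G}_m) \to \mathbb{G}_m \to 1$ and Hilbert~90 should give
\[
H^1(K, T) \;\simeq\; H^1(L, T') \;\simeq\; L^\times \big/ N_{L(\sqrt{u})/L}\bigl(L(\sqrt{u})^\times\bigr).
\]

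Next, I would take $X$ to be the $T$-torsor corresponding to the class of $a\pi$, where $D = (u, a\pi)_L$ is the quaternion division algebra produced in Proposition~\ref{fields}. Concretely, $X$ is the Weil restriction to $K$ of the affine $L$-variety defined by $N_{L(\sqrt{u})/L}(y) = a\pi$. The key translation is that for any field extension $M/K$, one has $X(M) \neq \emptyset$ if and only if $a\pi$ is a norm from $L(\sqrt{u}) \otimes_K M$ down to $L \otimes_K M$, which in turn is equivalent to $D \otimes_K M$ being split, componentwise over each factor of $L \otimes_K M$. This is exactly the criterion that drove the proof of Theorem~\ref{rank3-semisimple}.

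Given this dictionary between $M$-points of $X$ and splittings of $D \otimes_K M$, the remainder of the argument proceeds essentially verbatim. A closed point of degree $1$ or $2$ on $X$ would split $D$ over some $M/K$ with $[M:K] \leq 2$, forcing $\ind\cores_{L/K}(D) \leq 2$ and contradicting Proposition~\ref{fields}. In the other direction, the splitting of $D$ by $L(\sqrt{u})/L$ yields a closed point of $X$ of degree dividing $2p$, while Proposition~\ref{split} (applied to $D$, whose index $2$ is coprime to $p$) produces a field $M/K$ of degree coprime to $p$ with $X(M) \neq \emptyset$; since $\cores_{L/K}(D) \otimes_K M$ is then split and has index $4$, the integer $[M:K]$ is divisible by $4$, so the greatest common divisor of the available closed-point degrees is exactly $2$.

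The only genuinely new step beyond the semisimple case is the construction and cohomological analysis of $T$; I expect the main obstacle to be nothing deeper than setting up the correct Weil restriction and verifying the identification of $H^1(K, T)$ so that the class of $a\pi$ tracks the quaternion algebra $D$. Once that dictionary is in place, Propositions~\ref{fields} and~\ref{split} supply all of the arithmetic input.
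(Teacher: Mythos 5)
Your proposal is correct and follows essentially the same route as the paper: the paper likewise takes $T = R_{L/K}(R^1_{E/L}\G_m)$ for a quadratic splitting field $E/L$ of $D$ (your $E = L(\sqrt{u})$), identifies $H^1(K,T) \simeq L^*/N_{E/L}(E^*)$ with the subgroup of $\Br(L)$ split by $E$, lets $X$ be the torsor corresponding to $D$ (equivalently, to the class of $a\pi$), and then repeats the argument of Theorem~\ref{rank3-semisimple}.
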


\begin{proof}  Let $\ell/k$ be a separable field extension of degree $p$ and $L/K$ the unramified extension of degree $p$
with residue field $\ell$. 
 Let   $D$ be a quaternion division algebra  over $L$ as in (\ref{fields}). 
Let $E/L$ be a degree two extension which splits $D$. 
Let $R^1_{E/L} \G_m$ be the kernel of the  morphism $R_{E/L} \G_m \to \G_m$ induced by the norm map and
 let $T  = R_{L/K}(R^1_{E/L} \G_m) $.
Then $T$ is a torus of rank $p$ (cf.  \cite{vosk98}).
We have $H^1(K, T) = H^1(L, R^1_{L/E}\G_m) \simeq L^*/N_{E/L}(E^*)$ (cf. \cite[Lemma 3.2.(a)]{totori}).
Since $L^*/N_{E/L}(E^*)$ is isomorphic to the subgroup of $\Br(L)$ consisting 
of the classes of central simple algebras that split over $E$, 
the quaternion division algebra $D$ defines a class in $H^1(K, T)$. 
Let $X$ be the $T$-torsor corresponding to $D$.
Then as in (\ref{rank3-semisimple}), $X$ admits zero-cycle of degree 2 but has no closed point of degree 1 or 2. 
\end{proof}

\begin{rem}(Colliot-Th\'el\`ene)
\label{colliot} 
Let $G$ be as in (\ref{rank3-semisimple} or \ref{rank3-torus}) with rank of $G$ equal to 3. 
Then by taking $ G \times (SL_2)^r$ or $G \times (\G_m)^r$, one gets examples of 
semisimple groups and   tori   of rank $n$ for every $n \geq 3$ which admit  torsors with   zero-cycles of degree 2 but have no closed points of degree 1 or 2.
\end{rem}

\begin{rem} Since $\Q(t)$ (resp. $\Q_p(t)$) has a discrete valuation $\nu$ with residue field a global field, one can descend
 the above examples
over the completion of $\Q(t)$ (resp. $\Q_p(t)$) at $\nu$ to $\Q(t)$ (resp. $\Q(t)$). 
Thus we have examples of connected linear algebraic groups $G$ over 
$\Q(t)$ (resp. over $\Q_p(t)$) for which the question of Totaro has negative answer. 
\end{rem}
 
\section{An Example over a $p$-adic Field}

In this section, we give an example of a smooth connected linear algebraic group $G$ over a $p$-adic field for which the question of Totaro has a negative answer.

\begin{lem}
\label{ker1} 
Let $K$ be a field and $p$ a prime not equal to char$(K)$.
For $a \in K^*\setminus K^{*p}$, the kernel of the natural homomorphism $K^*/K^{*p} \to K(\sqrt[p]{a})^*/K(\sqrt[p]{a})^{*p}$ is 
generated by the class of $a$. 
\end{lem}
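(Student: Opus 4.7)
My plan is to handle the case $\zeta_p \in K$ directly with a Galois-theoretic argument, and then reduce the general case to this case by adjoining a primitive $p$-th root of unity.

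First, assume $\zeta_p \in K$. Since $a \notin K^{*p}$, the extension $L/K$ is cyclic Galois of degree $p$, generated by some $\sigma$ with $\sigma(\alpha) = \zeta \alpha$, where $\alpha = \sqrt[p]{a}$ and $\zeta$ is a primitive $p$-th root of unity. Suppose $b \in K^*$ is in the kernel, so $b = c^p$ for some $c \in L^*$. Applying $\sigma$ gives $\sigma(c)^p = \sigma(b) = b = c^p$, so $\sigma(c)/c$ is a $p$-th root of unity in $L$, say $\sigma(c)/c = \zeta^j$ for some $j \in \{0, 1, \dots, p-1\}$. Then $\sigma(c \alpha^{-j}) = \zeta^j c \cdot \zeta^{-j} \alpha^{-j} = c \alpha^{-j}$, so $c \alpha^{-j} \in L^\sigma = K$. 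Writing $c = e \alpha^j$ with $e \in K^*$ gives $b = c^p = e^p a^j$, so the class of $b$ in $K^*/K^{*p}$ is $j$ times the class of $a$.

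For the general case, set $K' = K(\zeta_p)$, so $d := [K':K]$ divides $p-1$ and is coprime to $p$. The key observation is that the natural map $K^*/K^{*p} \to K'^*/K'^{*p}$ is injective: if $x \in K^*$ becomes a $p$-th power in $K'$, then $N_{K'/K}(x) = x^d$ is a $p$-th power in $K$, and since $\gcd(d,p) = 1$, a Bezout relation $md + np = 1$ yields $x \in K^{*p}$. The same norm argument shows that $a \notin K'^{*p}$: otherwise $a^d \in K^{*p}$ would force $a \in K^{*p}$.

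Set $L' = K'(\sqrt[p]{a}) = L \cdot K'$. By Step 1 applied to $K'$ and $L'$, the kernel of $K'^*/K'^{*p} \to L'^*/L'^{*p}$ is generated by the class of $a$. Now take $b \in K^*$ in the kernel of $K^*/K^{*p} \to L^*/L^{*p}$. By functoriality $b$ also lies in the kernel of $K'^*/K'^{*p} \to L'^*/L'^{*p}$, so $b \equiv a^j \pmod{K'^{*p}}$ for some $j$. Thus $b a^{-j} \in K^*$ is a $p$-th power in $K'$, and by the injectivity above, $b a^{-j} \in K^{*p}$, finishing the proof.

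The proof is essentially routine; the only subtle point is making sure that the reduction to $\zeta_p \in K$ does not lose information, which is why the norm-based injectivity of $K^*/K^{*p} \hookrightarrow K'^*/K'^{*p}$ (and the corresponding fact that $a$ remains non-$p$-th-power over $K'$) is isolated as a separate lemma-like step.
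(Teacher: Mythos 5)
Your proof is correct and follows essentially the same route as the paper: reduce to the case $\zeta_p \in K$ via the prime-to-$p$ injectivity of $K^*/K^{*p} \to K(\zeta_p)^*/K(\zeta_p)^{*p}$, then use the cyclic Galois action on $K(\sqrt[p]{a})$ to show $c\,\alpha^{-j}$ is Galois-invariant. Your write-up is in fact slightly more careful than the paper's at the reduction step, since you explicitly verify that $a$ stays a non-$p$-th power over $K(\zeta_p)$ and that the conclusion descends back to $K$.
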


\begin{proof} 
Let $\zeta$ be a primitive $p^{\rm th}$ root of unity in an extension of $K$.
Since $[K(\zeta) : K]$ is coprime to $p$, the map $K^*/K^{*p} \to K(\zeta)^*/K(\zeta)^{*p}$ is injective. 
Thus replacing $K$ by $K(\zeta)$, we assume that $\zeta \in K$.
Then, by Kummer theory, the extension $K(\sqrt[p]{a})/K$ is a cyclic extension with Gal$(K(\sqrt[p]{a})/K)$ generated by $\sigma$ given by $\sigma(\sqrt[p]{a}) = \zeta^i \sqrt[p]{a}$ for some $i$ coprime to $p$. 
Let $b \in K^*$ be such that $b = c^p$ for some $c \in K(\sqrt[p]{a})^{*p}$. 
Then $\sigma(c) = \zeta^j c$ for some $j$.
Since $i$ is coprime to $p$, there exists $i'$ such that $ii' = j$ modulo $p$.
We have $\sigma(\sqrt[p]{b}/\sqrt[p]{a}^{i'}) = \sqrt[p]{b}/\sqrt[p]{a}^{i'}$, hence $\sqrt[p]{b}/\sqrt[p]{a}^{i'} \in K$.
In particular, $b = a^{i'} d^p$ for some $d \in K^*$.
\end{proof}

\begin{lem}
\label{ker} 
Let $K$ be a field and $p$ a prime not equal to char$(K)$.
If $L/K$ is a finite extension of degree $n = p^dm$ such that $p$ does not divide $m$, then the order of the kernel of the natural homomorphism $K^*/K^{*p} \to L^*/L^{*p}$ is at most $p^d$. 
\end{lem}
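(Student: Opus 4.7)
The plan is to reduce, via Galois closure and Sylow theory, to a tower of degree-$p$ extensions and then to iterate Lemma~\ref{ker1} along the tower. After first passing to the maximal separable subextension of $L/K$ (the purely inseparable part has degree a power of $\mathrm{char}(K)$, hence coprime to $p$, and a direct argument shows such extensions induce injections on $p$-th power quotients), I may assume $L/K$ is separable.

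I would next pass to the Galois closure $\widetilde L/K$, set $G = \Gal(\widetilde L/K)$ and $H = \Gal(\widetilde L/L)$, and observe that $[G:H] = p^d m$ forces $|G|_p = p^{d+e}$ and $|H|_p = p^e$ for some $e \geq 0$. Choose a Sylow $p$-subgroup $P$ of $H$ and extend it to a Sylow $p$-subgroup $P'$ of $G$; since $P' \cap H$ is a $p$-subgroup of $H$ containing $P$, it equals $P$. Setting $F = \widetilde L^{P'}$ and $M = \widetilde L^{P}$, one has $K \subset F \subset M$ and $L \subset M$ with $[F:K]$ and $[M:L]$ both coprime to $p$ and $[M:F] = p^d$. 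A routine norm-and-B\'ezout argument shows that a finite separable extension of degree coprime to $p$ induces an injection on $p$-th power quotients, so $\ker(K^*/K^{*p} \to L^*/L^{*p})$ injects into $\ker(F^*/F^{*p} \to M^*/M^{*p})$; it therefore suffices to bound the latter by $p^d$.

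Since $\widetilde L/F$ is Galois with $p$-group $P'$, I would pick a chain $P = P_d \subsetneq P_{d-1} \subsetneq \cdots \subsetneq P_0 = P'$ with $[P_{i-1}:P_i] = p$ and let $N_i = \widetilde L^{P_i}$; this gives a tower $F = N_0 \subset N_1 \subset \cdots \subset N_d = M$ with $[N_i:N_{i-1}] = p$ at each step. At each step the kernel $\ker(N_{i-1}^*/N_{i-1}^{*p} \to N_i^*/N_i^{*p})$ has order at most $p$: if $a \in N_{i-1}^*$ satisfies $a = b^p$ for some $b \in N_i$, then $[N_{i-1}(b):N_{i-1}]$ divides $p$, forcing either $a \in N_{i-1}^{*p}$ or $N_i = N_{i-1}(\sqrt[p]{a})$, in which case Lemma~\ref{ker1} applies directly. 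Iterating the tower inequality $|\ker(A \to C)| \leq |\ker(A \to B)| \cdot |\ker(B \to C)|$ along the chain then yields $|\ker(F^*/F^{*p} \to M^*/M^{*p})| \leq p^d$. The main subtlety is the Sylow-theoretic setup, which is needed to simultaneously arrange $[F:K]$ and $[M:L]$ to be coprime to $p$ while $[M:F] = p^d$ decomposes into a chain of degree-$p$ steps; the minor technicality that Lemma~\ref{ker1} is stated only for Kummer extensions is handled by the short degree argument above.
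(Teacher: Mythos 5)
Your argument is correct, but it takes a noticeably different and heavier route than the paper's. The paper proves the lemma by a direct induction on $d$: if the kernel is nontrivial, pick any $a \in K^* \setminus K^{*p}$ with $a \in L^{*p}$; since a $p$-th root $b$ of $a$ already lies in $L$ and $x^p-a$ is irreducible over $K$, the field $E = K(\sqrt[p]{a}) = K(b)$ is automatically a degree-$p$ subextension of $L$, so Lemma~\ref{ker1} bounds the first step by $p$, the induction hypothesis applied to $L/E$ (of degree $p^{d-1}m$) bounds the second step by $p^{d-1}$, and the tower inequality finishes. In other words, the degree-$p$ tower you work hard to construct is produced ``for free,'' one rung at a time, by the kernel elements themselves, with no need for the separability reduction, the Galois closure, or the Sylow-subgroup bookkeeping. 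Your approach trades that economy for explicitness: the norm-and-B\'ezout injectivity for prime-to-$p$ extensions and the chain $P = P_d \subsetneq \cdots \subsetneq P_0 = P'$ inside the Sylow subgroup are all sound (note that $P' \cap H = P$ and the index-$p$ chain exist exactly as you say), and your step-by-step bound via Lemma~\ref{ker1} is the same local ingredient the paper uses. Both proofs are valid; the paper's is shorter and avoids any appeal to the Galois closure, while yours isolates cleanly the two reductions (prime-to-$p$ descent and degree-$p$ towers) that make the bound $p^d$ transparent.
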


\begin{proof} 
We prove the lemma by induction on $d$. 
Suppose $d = 0$. 
Then $[L : K]$ is coprime to $p$ and hence the homomorphism $K^*/K^{*p} \to L^*/L^{*p}$ is injective. 
Suppose that $d \geq 1$. 
Suppose there exists $a \in K^* \setminus K^{*p}$ with $a \in L^{*p}$.
Let $E = K(\sqrt[p]{a}) \subseteq L$. 
By (\ref{ker1}),  the kernel of $K^*/K^{*p} \to E^*/E^{*p}$ has order $p$.
Since $[L : E] = p^{d-1}m$, by the induction hypotheses, the kernel of the homomorphism $E^*/E^{*p} \to L^*/L^{*p}$ has order at most $p^{d-1}$. 
Thus the order of $H$ is at most $p^d$. 
\end{proof}
 
\begin{lem}
\label{classes} 
If $k$ is a $p$-adic field, the order of $k^*/k^{*p}$ is at least $p^{d+1}$, where $d = [k : \Q_p]$. 
\end{lem}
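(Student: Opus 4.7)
The plan is to analyze $k^*/k^{*p}$ via the standard filtration on the multiplicative group of a $p$-adic field. Fix a uniformizer $\pi$ and let $\kappa$ be the residue field of cardinality $q = p^f$; write $d = ef$, where $e$ is the ramification index. The valuation yields $k^* \cong \Z \oplus O_k^*$, so $k^*/k^{*p} \cong \Z/p \oplus O_k^*/O_k^{*p}$, and it suffices to prove $|O_k^*/O_k^{*p}| \geq p^d$.

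Next, the Teichm\"uller decomposition $O_k^* \cong \mu_{q-1} \oplus U^{(1)}$ (with $U^{(1)} = 1 + \pi O_k$) together with the fact that $p \nmid q-1$ lets us discard the factor $\mu_{q-1}$, reducing the problem to $|U^{(1)}/U^{(1)p}| \geq p^d$. The key input is the $p$-adic logarithm: for $n > e/(p-1)$, the series $\log(1+x)$ converges on $U^{(n)}$ and defines a $\Z_p$-module isomorphism $U^{(n)} \xrightarrow{\sim} \pi^n O_k \cong \Z_p^d$. In particular, $U^{(n)}$ is $\Z_p$-free of rank $d$.

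The main obstacle is transferring this information from the high-level subgroup $U^{(n)}$ back up to $U^{(1)}$, since the inclusion $U^{(n)} \hookrightarrow U^{(1)}$ has a nontrivial finite cokernel and the induced map $U^{(n)}/U^{(n)p} \to U^{(1)}/U^{(1)p}$ need not be injective. The cleanest resolution is to observe that $U^{(1)}$ is a pro-$p$ abelian group, since each successive quotient $U^{(m)}/U^{(m+1)} \cong \kappa$ is an $\mathbb{F}_p$-vector space; hence $U^{(1)}$ carries a canonical $\Z_p$-module structure, and since $U^{(n)}$ is an open subgroup of finite index, $U^{(1)}$ is finitely generated over $\Z_p$.

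By the structure theorem for finitely generated $\Z_p$-modules we then get $U^{(1)} \cong \Z_p^r \oplus T$. Tensoring with $\Q_p$ and using that $U^{(n)}$ is of finite index in $U^{(1)}$ identifies the rank as $r = d$, and since $U^{(1)}$ contains all of the $p$-primary torsion of $O_k^*$, the torsion part is $T = \mu_{p^\infty}(k)$. Reducing modulo $p$-th powers gives $|U^{(1)}/U^{(1)p}| = p^d \cdot |\mu_p(k)| \geq p^d$, which combined with the reductions above yields $|k^*/k^{*p}| \geq p^{d+1}$.
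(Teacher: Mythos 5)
Your argument is correct and amounts to the same approach as the paper: the paper simply cites the structure theorem $k^* \simeq \Z \oplus (\Z/(q-1)\Z) \oplus (\Z/p^a\Z) \oplus \Z_p^d$ from Neukirch and reads off the subgroup $(\Z/p\Z)^{d+1}$ of $k^*/k^{*p}$, whereas you re-derive that decomposition from scratch via the unit filtration and the $p$-adic logarithm. In both cases the bound comes from the free $\Z_p$-rank $d$ of the principal units together with the extra $\Z/p$ contributed by the valuation, so your proof is a self-contained unpacking of the paper's citation rather than a different route.
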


\begin{proof} 
Let $q$ be the number of elements  in the residue field of $k$.
By (\cite[Proposition 5.7, p.140]{Neu}), we have $k^* \simeq \Z \oplus (\Z/(q-1)\Z) \oplus (\Z/p^a\Z) \oplus \Z_p^d$ for some $a \geq 0$.
Hence $(\Z/p\Z)^{d + 1}$ is isomorphic to a subgroup of $k^*/k^{*p}$.
\end{proof}

For a $p$-adic field $k$ and $n \geq 1$, there are only finitely many extension of $k$ of degree $n$ (cf. \cite[Section 2.5, Proposition 14]{lang94}), and there exists at least one extension of $k$ degree $n$. 
 
\begin{lem} 
\label{algebras} 
Let $k$ be a $p$-adic field and $E/k$ be a finite extension containing all of the degree $p$ extensions of $k$. 
Let $D$ be a central division algebra of degree $p$ over $E$.
Then for every prime $\ell$, there exists an extension $L/k$ of degree coprime to $\ell$ and a degree $p$ extension $M/L$ such that $D \otimes_E (M \otimes_k E) $ is a split algebra. 
\end{lem}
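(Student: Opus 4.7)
The plan is to arrange that every field component of $M \otimes_k E$ has degree divisible by $p$ over $E$. Since each such component is a finite extension of the local field $E$ and the class of $D$ lies in $\Br(E)[p]$, any $p$-divisible extension annihilates it, and then $M \otimes_k E$ splits $D$. I will build $M$ as the degree $p$ descent of a Kummer extension $L'(\sqrt[p]{c})$ constructed over $L' := L(\zeta_p)$, with $c$ produced by a counting argument inside $L^*/L^{*p}$ powered by Lemmas \ref{ker} and \ref{classes}.

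First I choose $L/k$ of degree $m$ coprime to $\ell$ with $m$ large enough that $p^{m[k:\Q_p]+1} > [k(\zeta_p):k]\cdot[E:k]$; this is possible because a $p$-adic field has extensions of every degree, so $m$ may be picked coprime to $\ell$ and arbitrarily large. Set $L' = L(\zeta_p)$, whose degree over $L$ divides $p-1$ and hence is coprime to $p$. Decompose $L' \otimes_k E = \prod_j F_j$ into a product of fields, each containing both $L'$ and $E$, and note the dimension identity $\sum_j [F_j:L] = [L':L]\cdot [E:k]$. For each $j$, Lemma \ref{ker} gives $|H_j| \leq p^{v_p([F_j:L])} \leq [F_j:L]$, where $H_j := \ker(L^*/L^{*p} \to F_j^*/F_j^{*p})$. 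Combined with the lower bound $|L^*/L^{*p}|\geq p^{m[k:\Q_p]+1}$ of Lemma \ref{classes} and the choice of $m$, this yields $\sum_j|H_j| < |L^*/L^{*p}|$, so $\bigcup_j H_j$ is a proper subset of $L^*/L^{*p}$ and I may select $c \in L^*$ whose class avoids every $H_j$.

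Set $M' := L'(\sqrt[p]{c})$; since $c \notin L^{*p}$, hence $c \notin L'^{*p}$ by Lemma \ref{ker}, this is a degree $p$ Kummer extension of $L'$. By construction $M'\otimes_k E = \prod_j F_j(\sqrt[p]{c})$ with each factor a genuine degree $p$ extension of the local field $F_j$, which kills $D\otimes_E F_j \in \Br(F_j)[p]$. Now $M'/L$ is Galois, fitting in $1 \to \Z/p \to \Gal(M'/L) \to \Gal(L'/L) \to 1$ of coprime orders, so Schur--Zassenhaus furnishes a complement $Q$; set $M := (M')^Q$, a degree $p$ extension of $L$ with $ML' = M'$. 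For any component $G$ of $M\otimes_k E$, the factors of $G\otimes_M M'$ are field extensions of $G$ of degree dividing $[L':L]$ that appear among the $F_j(\sqrt[p]{c})$, hence have degree divisible by $p$ over $E$; as $[L':L]$ is coprime to $p$, this forces $p\mid[G:E]$, so $G$ splits $D$. The main obstacle is the counting step, where the lower bound from Lemma \ref{classes} must be pushed above $\sum_j|H_j|$; once $m$ is chosen large and coprime to $\ell$, the remaining verifications are formal.
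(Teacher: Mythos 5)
Your proof is correct and rests on the same engine as the paper's: pick $L/k$ of large degree coprime to $\ell$, use Lemmas \ref{ker} and \ref{classes} to find a class $c\in L^*/L^{*p}$ surviving in every field component of the base change to $E$, adjoin $\sqrt[p]{c}$, and conclude by the local splitting criterion. The only real difference is your construction of $M$: you pass to $L'=L(\zeta_p)$ so that $L'(\sqrt[p]{c})/L'$ is Kummer, and then descend to a degree $p$ extension of $L$ via Schur--Zassenhaus. This detour is unnecessary: for $p$ prime, $x^p-c$ is already irreducible over any field in which $c$ is not a $p$-th power, so $M=L(\sqrt[p]{c})$ is itself a degree $p$ extension of $L$ with $M\otimes_k E\simeq\prod_j L_j(\sqrt[p]{c})$, each factor a degree $p$ extension of the local field $L_j$ --- which is exactly what the paper does. (On the plus side, your counting step, bounding $\sum_j|H_j|$ against $|L^*/L^{*p}|$ and avoiding the union of the $H_j$, is stated more cleanly than the paper's version.)
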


\begin{proof} 
Let $N = [E : k]$ and $\ell$ a prime. 
For every $i$, $1 \leq i \leq N$, write $i = p^{d_i} m_i$ for some $d_i \geq 0$ and $m_i$ coprime to $p$. 
Let $d$ be a natural number coprime to $\ell$ such that $d \geq \prod (d_i + 1) $ for all $i$, $1 \leq i \leq N$.
Since $k$ is a $p$-adic field, there exists an extension $L/k$ of degree $d$. 
Write $L \otimes E \simeq \prod L_j$. 
Then each $L_j$ is an extension of $L$ of degree at most $N$. 
For each $j$, let $H_j$ be the kernel of the natural homomorphism $L^*/L^{*p} \to L_j^*/L_j^{*p}$. Let $i = [L_j : L]$. 
Since $1 \leq [L_j : L] \leq N$, by ( \ref{ker}), the order of $H_j$ is at most $p^{d_i + 1}$. 
Thus, the order of the product of all $H_j$ is at most $p^d$. 
Since the order of $L^*/L^{*p}$ is at least $p^{d+1}$ (cf. \ref{classes}), there exists $a \in L^*$ such that $a \not\in H_j$ for all $j$. 
In particular $a \not\in L_j^{*p}$ for all $j$. 
Since $L_j$ is a $p$-adic field and $D$ a central simple algebra of degree $p$, $D \otimes_E L_j(\sqrt[p]{a})$ is a split algebra (cf. \cite[Section 6.1, Corollary 1]{CF}). 
Let $M =L(\sqrt[p]{a})$.
Then $M \otimes_k E \simeq \prod L_j(\sqrt[p]{a})$, hence $D \otimes_E( M \otimes_k E)$ is a split algebra. 
\end{proof}

\begin{thm} 
\label{theorem}
Let $k$ be a $p$-adic field and $E/k$ be a finite extension containing all of the degree $p$ extensions of $k$. 
Let $G = R_{E/k}(PGL_p)$ be the linear algebraic group given by the Weil transfer of $PGL_p$ from $E$ to $k$. 
Then every non-trivial principal homogeneous space of $G$ over $k$ admits a zero-cycle of degree $p$ but has no closed point of degree $p$.
\end{thm}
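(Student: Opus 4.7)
The plan is to translate everything into a statement about central simple $E$-algebras via the identification
\[
H^1(k, G) \;=\; H^1(k, R_{E/k} \PGL_p) \;=\; H^1(E, \PGL_p),
\]
under which a non-trivial $G$-torsor $X$ corresponds to a central division algebra $D$ of degree $p$ over $E$. For an arbitrary field extension $F/k$, decomposing the \'etale $E$-algebra as $F \otimes_k E = \prod_i E_i$ into a finite product of field extensions of $E$, one has $X(F)\ne\emptyset$ if and only if each $D \otimes_E E_i$ is split. Since $D$ has prime index $p$, this is equivalent to $p \mid [E_i:E]$ for every $i$, and in particular forces $p \mid [F:k]$ (using $\sum_i [E_i:E] = [F:k]$).

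For the absence of a closed point of degree $p$, I argue that such a point would arise from a degree-$p$ field extension $F/k$ with $X(F)\ne\emptyset$. By the hypothesis on $E$, $F$ embeds into $E$, so the multiplication map $F \otimes_k E \to E$, $f\otimes e \mapsto fe$, is a surjection of $E$-algebras; consequently $E$ itself appears as one of the factors $E_i$, with $[E_i:E]=1$. This contradicts the required divisibility $p \mid [E_i:E]$, so no such $F$ exists.

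For the existence of a zero-cycle of degree $p$, the same divisibility shows that every closed point of $X$ has degree a positive multiple of $p$, so the gcd of the closed-point degrees is already a multiple of $p$; it remains to show this gcd is exactly $p$. This is where Lemma \ref{algebras} enters. Applied with $\ell = p$, it produces an extension $M/k$ of degree $p d$ with $d$ coprime to $p$ over which $D \otimes_E (M \otimes_k E)$ is split, yielding a closed point $x_0 \in X$ whose degree is $p a_0$ for some $a_0 \mid d$, hence coprime to $p$. For each of the finitely many primes $q$ dividing $a_0$, I apply the lemma again with $\ell = q$ to obtain a closed point $x_q$ whose degree is a multiple of $p$ and coprime to $q$. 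The gcd of the degrees of $x_0$ and these finitely many $x_q$ is then exactly $p$, giving a zero-cycle of degree $p$.

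The step I expect to be the main obstacle is the zero-cycle part: Lemma \ref{algebras} only produces a splitting field of the desired shape one auxiliary prime at a time, so the only way to control the gcd of closed-point degrees is to invoke the lemma once with $\ell=p$ and then once more for each prime dividing the prime-to-$p$ part of the resulting degree, and verify that the combined gcd collapses to exactly $p$ rather than a larger multiple. The remaining steps are essentially bookkeeping for the Weil restriction / central simple algebra dictionary, together with the clean observation that $F \subseteq E$ forces $E$ itself to split off as a direct factor of $F \otimes_k E$.
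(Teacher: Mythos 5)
Your proof is correct and follows essentially the same route as the paper: the Weil-restriction dictionary identifying torsors with degree-$p$ division algebras over $E$, the observation that $F\subseteq E$ forces $E$ itself to appear as a factor of $F\otimes_k E$ (ruling out degree-$p$ points), and repeated use of Lemma \ref{algebras} at finitely many auxiliary primes to drive the gcd of closed-point degrees down to exactly $p$. The only small caveat is that your asserted equivalence ``$D\otimes_E E_i$ is split if and only if $p\mid [E_i:E]$'' requires local class field theory for the ``if'' direction rather than just primality of the index, but you only ever use the ``only if'' direction, which does follow from $\ind(D)=p$ alone, so nothing is affected.
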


\begin{proof} 
Let $X$ be a principal homogeneous space of $G$ over $k$ with $X(k) = \emptyset$. 
Since $H^1(k, G)$ classifies principal homogenous spaces of $G$ over $k$ (cf. \cite[Section 1.5, Proposition 33]{serre97}), $X$ corresponds to an element of $H^1(k, G)$. 
By \cite[Section 1.5.b]{serre97}, we have $H^1(k, G) = H^1(E, PGL_p)$. 
Since $H^1(E, PGL_p)$ classifies central simple algebras of degree $p$ over $E$ (cf.  \cite[Theorem 2.4.3]{gisz06}), $X$ corresponds to central simple division algebra $D$ of degree $p$ over $E$.
Further, for any extension $F/k$, $X(F) \neq \emptyset$ if and only if $D \otimes_E (F \otimes_kE)$ is split (cf.  \cite[Theorems 2.4.3 and 5.2.1]{gisz06}). 
Let $\ell_1$ be a prime.
By (\ref{algebras}), there exists an extension $L_{\ell_1}/k$ of degree coprime to $\ell_1$ and a degree $p$ extension $M_{\ell_1}/L_{\ell_1}$ such that $D \otimes_E (M_{\ell_1} \otimes_k E)$ is a split algebra.
In particular, $X(M_{\ell_1}) \neq \emptyset$. 
Let $d = [L_{\ell_1} : k]$. 
Then $[M_{\ell_1} : k ] = dp$.
For every prime $\ell$ dividing $d$, by \ref{algebras}, there exists an extension $L_{\ell}/k$ of degree coprime to $\ell$ and a degree $p$ extension $M_{\ell}/L_{\ell}$ such that $D \otimes_E (M_{\ell} \otimes_k E)$ is a split algebra.
In particular, $X(M_\ell) \neq \emptyset$. 
Then the gcd of $[M_{\ell_1} : k]$ and $[M_\ell : k]$ for all $\ell$ dividing $d$ is $p$. 
Thus $X$ admits a zero-cycle of degree $p$.
Let $N/k$ be a degree $p$ extension. 
Then, by the choice of $E$, $N \subseteq E$.
Thus $N \otimes E \simeq \prod E_i$ with $[E_i : E] \leq p-1$. 
Since $D$ is a central division algebra of degree $p$ over $E$, $D \otimes_E E_i$ is division for all $i$. 
Then $D \otimes_k N$ is not split, hence $X(N) = \emptyset$. 
Thus there is no closed point of $X$ of degree $p$. 
\end{proof}

\begin{ex} 
\label{example-lf}
Let $k = \Q_2$ and $E = k(\sqrt{2}, \sqrt{3}, \sqrt{5})$.
Then $[E : k] = 8$ and $E$ contains every quadratic extension of $k$. 
Let $G = R_{E/k}(PGL_2)$. 
Then the rank of $G$ is 8. 
By \ref{theorem}, every non-trivial $G$-torsor admits a zero-cycle of degree 2 but has no closed point of degree 2.
As in (\ref{rank3-torus}), we also get an example of torus $T$ over $k$ of rank 8 and a non-trivial $T$-torsor that admits a zero-cycle of degree 2 but has no closed point of degree 1 or 2. 
\end{ex}

\begin{ex} 
\label{example-gf}
Let $k = \Q$, $E = k(\sqrt{2}, \sqrt{3}, \sqrt{5})$, and $G = R_{E/k}(PGL_2)$. 
Let $D = (a, b)$ be the quaternion division algebra over $E \otimes \Q_2$.
Let $a', b' \in E$ be close to $a$ and $b$, and let $D' = (a', b')$. 
Then $D' \otimes_E(E\otimes \Q_2) \simeq D$.
Let $X$ be the $G$-torsor associated to $D$. 
Since $X$ has no closed point of degree 1 or 2 over $\Q_2$ (\ref{example-lf}), $X$ has no closed point of degree 1 or 2. 
Using Krasner's lemma, we can show that $X$ has a zero-cycle of degree 2.
\end{ex}

\bibliographystyle{alpha}
\bibliography{totaro}

\def\cftil#1{\ifmmode\setbox7\hbox{$\accent"5E#1$}\else
  \setbox7\hbox{\accent"5E#1}\penalty 10000\relax\fi\raise 1\ht7
  \hbox{\lower1.15ex\hbox to 1\wd7{\hss\accent"7E\hss}}\penalty 10000
  \hskip-1\wd7\penalty 10000\box7}
\begin{thebibliography}{CTC79}

\bibitem[BFL90]{bayerlenstra90}
E.~Bayer-Fluckiger and H.~W. Lenstra, Jr.
\newblock Forms in odd degree extensions and self-dual normal bases.
\newblock {\em Amer. J. Math.}, 112(3):359--373, 1990.

\bibitem[Bha16]{niv16}
N.~Bhaskhar.
\newblock On {S}erre's injectivity question and norm principle.
\newblock {\em Comment. Math. Helv.}, 91(1):145--161, 2016.

\bibitem[Bla11a]{black11a}
J.~Black.
\newblock Implications of the {H}asse principle for zero cycles of degree one
  on principal homogeneous spaces.
\newblock {\em Proc. Amer. Math. Soc.}, 139(12):4163--4171, 2011.

\bibitem[Bla11b]{black11b}
J.~Black.
\newblock Zero cycles of degree one on principal homogeneous spaces.
\newblock {\em J. Algebra}, 334:232--246, 2011.

\bibitem[BP14]{blackpari14}
J.~Black and R.~Parimala.
\newblock Totaro's question for simply connected groups of low rank.
\newblock {\em Pacific J. Math.}, 269(2):257--267, 2014.

\bibitem[CF10]{CF}
{\em Algebraic number theory}.
\newblock Proceedings of an instructional conference organized by the London
  Mathematical Society, Edited by J. W. S. Cassels and A. Fr\"ohlich. London
  Mathematical Society, London, second edition, 2010.

\bibitem[Cip77]{cipolla77}
M.~Cipolla.
\newblock Remarks on the lifting of algebras over {H}enselian pairs.
\newblock {\em Math. Z.}, 152(3):253--257, 1977.

\bibitem[CTC79]{colliocoray79}
J.-L. Colliot-Th{\'e}l{\`e}ne and D.~Coray.
\newblock L'\'equivalence rationnelle sur les points ferm\'es des surfaces
  rationnelles fibr\'ees en coniques.
\newblock {\em Compositio Math.}, 39(3):301--332, 1979.

\bibitem[Flo04]{florence04}
M.~Florence.
\newblock Z\'ero-cycles de degr\'e un sur les espaces homog\`enes.
\newblock {\em Int. Math. Res. Not.}, (54):2897--2914, 2004.

\bibitem[FS95]{feinschacher95}
B.~Fein and M.~Schacher.
\newblock {${\bf Q}(t)$} and {${\bf Q}((t))$}-admissibility of groups of odd
  order.
\newblock {\em Proc. Amer. Math. Soc.}, 123(6):1639--1645, 1995.

\bibitem[GH06]{garihoff06}
S.~Garibaldi and D.~W. Hoffmann.
\newblock Totaro's question on zero-cycles on {$G_2$}, {$F_4$} and {$E_6$}
  torsors.
\newblock {\em J. London Math. Soc. (2)}, 73(2):325--338, 2006.

\bibitem[Gil97]{gille97}
P.~Gille.
\newblock La {$R$}-\'equivalence sur les groupes alg\'ebriques r\'eductifs
  d\'efinis sur un corps global.
\newblock {\em Inst. Hautes \'Etudes Sci. Publ. Math.}, (86):199--235 (1998),
  1997.

\bibitem[GSa]{totaro_adjoint}
R.~L. Gordon-Sarney.
\newblock Totaro's question for adjoint groups of types {$A_1$} and {$A_{2n}$}.
\newblock {\em to appear in Proc. Amer. Math. Soc.}

\bibitem[GSb]{totori}
R.~L. Gordon-Sarney.
\newblock Totaro's question for tori of low rank.
\newblock {\em to appear in Trans. Amer. Math. Soc.}

\bibitem[GS06]{gisz06}
P.~Gille and T.~Szamuely.
\newblock {\em Central simple algebras and {G}alois cohomology}, volume 101 of
  {\em Cambridge Studies in Advanced Mathematics}.
\newblock Cambridge University Press, Cambridge, 2006.

\bibitem[Lan94]{lang94}
S.~Lang.
\newblock {\em Algebraic number theory}, volume 110 of {\em Graduate Texts in
  Mathematics}.
\newblock Springer-Verlag, New York, second edition, 1994.

\bibitem[Neu99]{Neu}
J\"urgen Neukirch.
\newblock {\em Algebraic number theory}, volume 322 of {\em Grundlehren der
  Mathematischen Wissenschaften [Fundamental Principles of Mathematical
  Sciences]}.
\newblock Springer-Verlag, Berlin, 1999.

\bibitem[Par05]{pari05}
R.~Parimala.
\newblock Homogeneous varieties---zero-cycles of degree one versus rational
  points.
\newblock {\em Asian J. Math.}, 9(2):251--256, 2005.

\bibitem[Pie82]{pierce}
R.~S. Pierce.
\newblock {\em Associative algebras}, volume~88 of {\em Graduate Texts in
  Mathematics}.
\newblock Springer-Verlag, New York-Berlin, 1982.
\newblock Studies in the History of Modern Science, 9.

\bibitem[San81]{sansuc81}
J.-J. Sansuc.
\newblock Groupe de {B}rauer et arithm\'etique des groupes alg\'ebriques
  lin\'eaires sur un corps de nombres.
\newblock {\em J. reine angew. Math.}, 327:12--80, 1981.

\bibitem[Ser]{serre97}
J.-P. Serre.
\newblock {\em Galois cohomology}.
\newblock Springer-Verlag, Berlin.
\newblock Translated from the French by Patrick Ion and revised by the author.

\bibitem[Spr52]{springer52}
T.~A. Springer.
\newblock Sur les formes quadratiques d'indice z\'ero.
\newblock {\em C. R. Acad. Sci. Paris}, 234:1517--1519, 1952.

\bibitem[Tot04]{totaro04}
B.~Totaro.
\newblock Splitting fields for {$E_8$}-torsors.
\newblock {\em Duke Math. J.}, 121(3):425--455, 2004.

\bibitem[Ve{\u\i}69]{veis69}
B.~Ju. Ve{\u\i}sfe{\u\i}ler.
\newblock Certain properties of singular semisimple algebraic groups over
  non-closed fields.
\newblock {\em Trudy Moskov. Mat. Ob\v s\v c.}, 20:111--136, 1969.

\bibitem[Vos98]{vosk98}
V.~E. Voskresenski\u\i.
\newblock {\em Algebraic groups and their birational invariants}, volume 179 of
  {\em Translations of Mathematical Monographs}.
\newblock American Mathematical Society, Providence, RI, 1998.

\end{thebibliography}

\end{document}